\documentclass[a4paper,12pt]{article}

\usepackage{amsfonts}
\usepackage{amscd,color}
\usepackage{amsmath,amsfonts,amssymb,amscd}
\usepackage{indentfirst,graphicx,epsfig}
\usepackage{graphicx,psfrag}
\input{epsf}
\usepackage{graphicx}
\usepackage{epstopdf}
\usepackage{caption}

\setlength{\textwidth}{152mm} \setlength{\textheight}{230mm}
\setlength{\headheight}{2cm} \setlength{\topmargin}{0pt}
\setlength{\headsep}{0pt} \setlength{\oddsidemargin}{0pt}
\setlength{\evensidemargin}{0pt}

\parskip=3pt

\voffset -25mm \rm

\newtheorem{thm}{Theorem}[section]
\newtheorem{df}[thm]{Definition}
\newtheorem{problem}[thm]{Problem}

\newtheorem{lem}[thm]{Lemma}

\newtheorem{cor}[thm]{Corollary}

\newenvironment {proof} {\noindent{\em Proof.}}{\hspace*{\fill}$\Box$\par\vspace{4mm}}

\baselineskip=20pt

\title{\bf Degree sum conditions for graphs\\ to have proper connection number 2\footnote{Supported by NSFC No.11371205, 11531011.}}

\author{{\small Hong Chang, Zhong Huang, Xueliang Li} \\
{\small  Center for Combinatorics and LPMC}\\
{\small Nankai University, Tianjin 300071, P.R. China}\\
{\small Email: changh@mail.nankai.edu.cn, 2120150001@mail.nankai.edu.cn, lxl@nankai.edu.cn}\\
}
\date{}
\begin{document}
\maketitle
\begin{abstract}
A path $P$ in an edge-colored graph $G$ is a \emph{proper path} if no two adjacent edges of $P$ are colored with the same color. The graph $G$ is \emph{proper connected} if, between every pair of vertices, there exists a proper path in $G$. The \emph{proper connection number} $pc(G)$ of a connected graph $G$ is defined as the minimum number of colors to make $G$ proper connected. In this paper, we study the degree sum condition for a general graph or a bipartite graph to have proper connection number 2. First, we show that if $G$ is a connected noncomplete graph of order $n\geq 5$ such that $d(x)+d(y)\geq \frac{n}{2}$ for every pair of nonadjacent vertices $x,y\in V(G)$, then $pc(G)=2$ except for three small graphs on 6, 7 and 8 vertices. In addition, we obtain that if $G$ is a connected bipartite graph of order $n\geq 4$ such that $d(x)+d(y)\geq \frac{n+6}{4}$ for every pair of nonadjacent vertices $x,y\in V(G)$, then $pc(G)=2$. Examples are given to show that the above conditions are best possible. \\[2mm]
\textbf{Keywords:} proper connection number; proper connection coloring; bridge-block tree; degree sum condition.\\
\textbf{AMS subject classification 2010:} 05C15, 05C40, 05C07.\\
\end{abstract}

\section{Introduction}

All graphs in this paper are undirected, finite and simple. We follow \cite{BM} for graph theoretical notation and terminology not described here. Let $G$ be a graph. We use $V(G), E(G), \left|G\right|, \Delta(G)$ and $\delta(G)$ to denote the vertex set, edge set, number of vertices, maximum degree and minimum degree of $G$, respectively. For any two disjoint subsets $X$ and $Y$ of $V(G)$, we denote by $E_G(X,Y)$ the set of edges of $G$ that have one end in $X$ and the other in $Y$, and denote by $\left|E_G(X,Y)\right|$ the number of edges in $E_G(X,Y)$. For $v\in V(G)$, let $N(v)$ denote the set of neighbours, and $d_H(v)$ denote the degree of $v$ in subgraph $H$ of $G$.

Let $G$ be a nontrivial connected graph with an associated \emph{edge-coloring} $c : E(G)\rightarrow \{1, 2, \ldots, t\}$, $t \in \mathbb{N}$, where adjacent edges may have the same color. If adjacent edges of $G$ are assigned different colors by $c$, then $c$ is a \emph{proper (edge-)coloring}. For a graph $G$, the minimum number of colors needed in a proper coloring of $G$ is referred to as the \emph{edge-chromatic number} of $G$ and denoted by $\chi'(G)$. A path of an edge-colored graph $G$ is said to be a \emph{rainbow path} if no two edges on the path have the same color. The graph $G$ is called \emph {rainbow connected} if every pair of distinct vertices of $G$ is connected by a rainbow path in $G$.  An edge-coloring of a connected graph is a \emph{rainbow connection coloring} if it makes the graph rainbow connected. This concept of rainbow connection of graphs was introduced by Chartrand et al.~\cite{CJMZ} in 2008. The \emph{rainbow connection number} $rc(G)$ of a connected graph $G$ is the smallest number of colors that are needed in order to make $G$ rainbow connected. The readers who are interested in this topic can see \cite{LSS,LS} for a survey.

Inspired by rainbow connection coloring and proper coloring in graphs, Andrews et al.~\cite{ALLZ} and Borozan et al.~\cite{BFGMMMT} introduced the concept of proper-path coloring. Let $G$ be a nontrivial connected graph with an edge-coloring. A path in $G$ is called a \emph{proper path} if no two adjacent edges of the path receive the same color. An edge-coloring $c$ of a connected graph $G$ is a \emph{proper connection  coloring} if every pair of distinct vertices of $G$ are connected by a proper path in $G$. And if $k$ colors are used, then $c$ is called a \emph{proper connection $k$-coloring}. An edge-colored graph $G$ is \emph{proper connected} if any two vertices of $G$ are connected by a proper path. For a connected graph $G$, the minimum number of colors that are needed in order to make $G$ proper connected is called the \emph{proper connection number} of $G$, denoted by \emph{$pc(G)$}. Let $G$ be a nontrivial connected graph of order $n$ and size $m$, then we have that $1\leq pc(G) \leq \min\{\chi'(G), rc(G)\}\leq m$. Furthermore, $pc(G)=1$ if and only if $G=K_n$ and $pc(G)=m$ if and only if $G=K_{1,m}$ as a star of size $m$. For more details, we refer to \cite{GLQ,LLZ,LWY} and a dynamic survey \cite{LC}.

In \cite{ALLZ}, the authors considered many conditions on $G$ which force $pc(G)$ to be small, in particular $pc(G)=2$. Recently, Huang et al. presented minimum degree condition for a graph to have proper connection number 2 in \cite{HLQ}. They showed that if $G$ is a connected noncomplete graph of order $n\geq5$ with $\delta(G)\geq n/4$, then $pc(G)=2$ except for two small graphs on 7 and 8 vertices. In addition, they obtained that if $G$ is a connected bipartite graph of order $n\geq4$ with $\delta(G)\geq \frac{n+6}{8}$, then $pc(G)=2$. It is worth mentioning that the two bounds on the minimum degree in the above results are best possible. On the other hand, in \cite{W}, the authors showed that if $G$ is a graph with $n$ vertices such that  $\delta(G)\geq\frac{n-1}{2}$, then $G$ has a Hamiltonian path. It is also known that if a noncomplete graph $G$ has a Hamiltonian path, then $pc(G)=2$ in \cite{ALLZ}. Hence, we can say that if the graph $G$ is not a complete graph with $\delta(G)\geq\frac{n-1}{2}$, then $pc(G)=2$. These results naturally lead the following two problems.

\begin{problem}\label{problem1}
Is there a constant $\frac{1}{2}\leq \alpha < 1$, such that if $d(x)+d(y)\geq \alpha n$ for every pair of nonadjacent vertices $x,y$ of a graph $G$ on $n$ vertices, then $pc(G)=2$ ?
\end{problem}

\begin{problem}\label{problem2}
 Is there a constant $\frac{1}{4} < \beta < 1$, such that if $d(x)+d(y)\geq \beta n$ for every pair of nonadjacent vertices $x,y$ of a bipartite graph $G$ on $n$ vertices, then $pc(G)=2$ ?
\end{problem}

This kind of conditions is usually called the {\it degree sum condition}. Our main result in this paper is devoted to studying degree sum condition for a general graph or a bipartite graph to have proper connection number 2. As a result, the following conclusions are obtained.

\begin{thm}\label{thm3.1}
Let $G$ be a connected noncomplete graph of order $n\geq 5$ with $G \notin \{G_1,G_2,G_3\}$ shown in Figure 1. If $d(x)+d(y)\geq \frac{n}{2}$ for eevery pair of nonadjacent vertices $x,y\in V(G)$, then $pc(G)=2$.
\end{thm}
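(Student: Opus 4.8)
The plan is to run the argument along the bridge-block tree of $G$, using the minimum degree theorem of Huang, Li and Qin where $\delta(G)$ is large and a direct structural analysis where it is small. Put $\sigma_2(G)=\min\{d(x)+d(y):\ x,y\in V(G),\ xy\notin E(G)\}$, so the hypothesis reads $\sigma_2(G)\ge n/2$. If $\delta(G)\ge n/4$ then $d(x)+d(y)\ge 2\delta(G)\ge n/2$ for all $x,y$, and by the theorem of Huang et al.\ quoted in the introduction $pc(G)=2$ unless $G$ is one of the two exceptional graphs on $7$ and $8$ vertices found there; those two graphs satisfy $\sigma_2\ge n/2$ and have $pc=3$, so they are two of the three forbidden graphs in Figure~1 and there is nothing to prove. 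Hence assume $\delta(G)<n/4$ and fix $v$ with $d(v)=\delta(G)$. A first useful remark is that, since any two vertices of degree $<n/4$ would form a nonadjacent pair with degree sum $<n/2$, the set of vertices of degree $<n/4$ is a clique; in particular the ``small'' vertices of $G$ are clustered together, and every vertex non-adjacent to $v$ has degree $>n/4$.

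Next let $\mathcal T$ be the bridge-block tree of $G$: its nodes are the $2$-edge-connected components of $G$ (which partition $V(G)$) and its edges are the bridges of $G$. If $B$ is a leaf of $\mathcal T$, attached to the rest of $G$ by a single bridge at a vertex $u_B$, then every vertex of $B$ other than $u_B$ has all its neighbours inside $B$, hence degree at most $|B|-1$; similarly a node of $\mathcal T$ of degree $2$ that consists of a single vertex is a cut vertex of degree $2$. Feeding such low-degree vertices into $\sigma_2(G)\ge n/2$ -- and using that they are forced to form a clique, so the small vertices cannot be spread out along $\mathcal T$ -- I expect to show that $\mathcal T$ has very few leaves (at most three) and only a bounded number of nodes, so that $G$ is one large $2$-edge-connected ``core'' $C$ with a few small pieces hung on it by bridges. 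The core $C$ is noncomplete and, by the same degree bookkeeping applied inside $C$ (or after peeling off its at most a couple of cut vertices), is $2$-connected or reduces to $2$-connected pieces; in many sub-cases $C$, or even $G$ itself, will have a Hamiltonian path. Either way $C$ admits a proper connection $2$-colouring, by the known results on the proper connection number of $2$-connected graphs and on graphs with a Hamiltonian path.

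It remains to extend this colouring across the bridges to the pendant pieces, still using only two colours, so that every pair of vertices is joined by a proper path. For a bridge $uw$ with $u\in C$ one uses that $u$, lying in a $2$-edge-connected graph, is reachable from every other core vertex by a proper path ending in either colour; choosing the colour of $uw$ and a proper $2$-colouring of the pendant piece to fit this lets one splice proper paths through the bridge, both from $C$ into a pendant piece and between two different pendant pieces. One then checks all pairs of vertices -- inside a pendant piece, inside $C$, and across. For small $n$ (roughly $5\le n\le 8$) the available choices become over-constrained and a finite case check is needed; this is where the third forbidden graph, on $6$ vertices, appears.

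The main obstacle is precisely the non-$2$-edge-connected situation: making a single $2$-colouring of $C$ simultaneously compatible with every pendant piece at its attaching vertex. This is the mechanism that pushes the proper connection number above $2$ (as for stars and broom-like graphs), so the real content of the proof is to verify that under $\sigma_2(G)\ge n/2$ the only obstructions are the three listed graphs -- in particular that the bridge-block tree cannot be more complicated than a small broom and that nothing bad survives in the small cases. That the bound $n/2$ cannot be lowered is a separate matter, handled by the extremal examples in the paper.
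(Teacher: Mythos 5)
Your reduction to the case $\delta(G)<n/4$ via the Huang--Li--Qin minimum degree theorem, and your identification of where the three exceptional graphs come from, match the paper. But the core of your plan --- decomposing $G$ by \emph{its own} bridge-block tree, $2$-colouring each $2$-edge-connected block, and splicing across the bridges --- has a genuine gap. There is no ``known result'' giving $pc(C)=2$ for a $2$-edge-connected (or even $2$-connected) noncomplete graph $C$: the theorems available give $pc=2$ only for $3$-connected noncomplete graphs and for \emph{bipartite} connected bridgeless graphs; for general $2$-connected graphs the bound is $pc\le 3$ and it is attained. Worse, the splicing step you describe needs each block to have the \emph{strong property} under a $2$-colouring (two proper paths between every pair whose start colours differ and whose end colours differ), and that is established only for bridgeless bipartite graphs (Lemma~\ref{lem2.2}). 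So even when the bridge-block tree of $G$ is a path, your argument does not close; and your fallback ``$C$ or $G$ will often have a Hamiltonian path'' is not forced by $\sigma_2\ge n/2$, which is far below the Ore threshold $n-1$ for traceability.

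The paper's key move, which your proposal is missing, is to work not with $G$ but with a \emph{maximum bipartite spanning subgraph} $H$ of $G$, chosen among all such to minimise $\Delta(H^*)$. If $\Delta(H^*)\le 2$ then $H^*$ is a path of bridgeless bipartite blocks, and a chaining argument using the strong property (Corollary~\ref{cor2.1}) gives $pc(G)\le pc(H)=2$. If $\Delta(H^*)\ge 3$, maximality of $H$ forces at most two $G$-edges to cross any cut-edge of $H$ (Fact~1), so almost every vertex of a leaf-block of $H$ has all its neighbours inside that block and is nonadjacent to the minimum-degree vertex $x_0$; the hypothesis $d(v)+d(x_0)\ge n/2$ then makes each leaf-block have at least roughly $n/2-\delta(G)+1$ vertices, and three or more leaf-blocks overshoot $n$ --- a counting contradiction. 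Your observation that the low-degree vertices form a clique is true but does not substitute for this mechanism. To repair your proof you would essentially have to introduce the bipartite spanning subgraph (or some other device supplying $2$-colourable blocks with the strong property), at which point you are reproducing the paper's argument.
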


\begin{figure}[h!]
\centering
\includegraphics[width=1.0\textwidth]{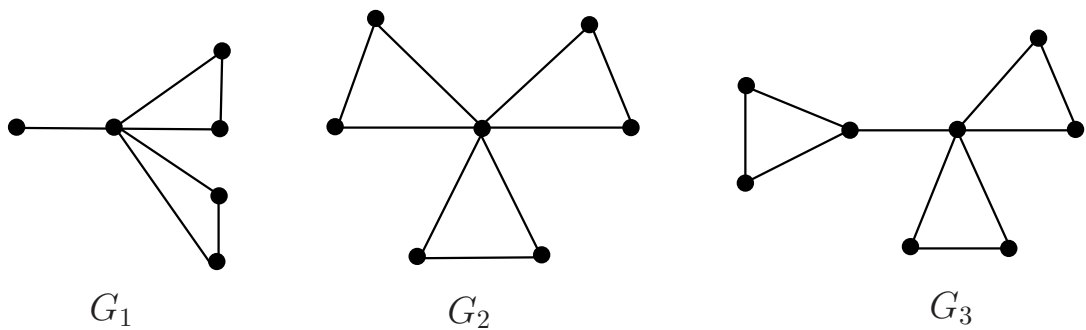}
\caption{Three graphs of Theorem \ref{thm3.1} }
\end{figure}

\begin{thm}\label{thm4.1}
Let $G$ be a connected bipartite graph of order $n\geq4$. If $d(x)+d(y)\geq \frac{n+6}{4}$ for every pair of nonadjacent vertices $x,y\in V(G)$, then $pc(G)=2$.
\end{thm}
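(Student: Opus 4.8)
The plan is to analyze $G$ through its bridge-block tree. If $G$ is $2$-connected then $pc(G)=2$, since every bipartite graph on $n\ge 4$ vertices is noncomplete and, by the result of Borozan et al.~\cite{BFGMMMT}, every $2$-connected noncomplete bipartite graph has proper connection number $2$; moreover it is known (see \cite{BFGMMMT}, \cite{HLQ}) that for such a block $B$ and any vertex $v\in B$ one can choose a proper connection $2$-coloring of $B$ that is \emph{$v$-good}, meaning every other vertex of $B$ is reached from $v$ both by a proper path starting with colour $1$ and by one starting with colour $2$. So we may assume $G$ has a cut vertex. The heart of the argument, and the only place where the degree sum hypothesis is really used, is the following claim: \emph{no vertex of $G$ is incident with three or more bridges} (consequently the bridges of $G$ form a forest of maximum degree at most $2$, and $G$ has at most one pendant vertex).

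Granting the claim, the proof is completed by a routine colouring. Since the bridges form a forest of maximum degree at most $2$, they admit a $2$-edge-colouring in which any two bridges sharing an endpoint get different colours. Root the bridge-block tree at any block and process it from the root outward: colour each nontrivial block $B$ by a $v$-good $2$-coloring, where $v$ is the cut vertex by which $B$ hangs on its parent, swapping the two colours if necessary to match the colour already forced on the parent bridge. One then checks that any two vertices of $G$ are joined by a proper path: their blocks are joined by a path in the bridge-block tree, and one walks that path, using the $v$-good property to leave each $2$-connected block with whichever colour is needed, and using that at most two bridges meet at any cut vertex to traverse the bridges properly. (A minor subtlety is that a $C_4$-block carrying two cut vertices offers only two of the four start/end colour pairs along its two internal paths; the rooted assignment, together with the freedom in the adjacent blocks and bridges, is arranged so that these two pairs suffice.)

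It remains to prove the claim. Suppose for contradiction that $v$ is incident with bridges $vu_1,vu_2,vu_3$, and let $C_i$ be the component of $G-v$ containing $u_i$, $i=1,2,3$. As each $vu_i$ is a bridge, every vertex of $C_i\setminus\{u_i\}$ is nonadjacent both to $v$ and to every vertex of $C_j$ for $j\ne i$. If two of the $C_i$ are singletons we get two nonadjacent pendant vertices, whence $1+1\ge\frac{n+6}{4}$ forces $n\le 2$; so at most one $C_i$ is a singleton, which also gives the ``at most one pendant'' part. Assume $|C_1|,|C_2|\ge 2$. Since $C_i$ is bipartite one can pick $w_i\in C_i\setminus\{u_i\}$ with $d_G(w_i)=d_{C_i}(w_i)$ at most roughly $\frac{1}{2}|C_i|$ — in fact inside a leaf block of the part of $G$ hanging on $u_i$ — the degenerate ``star'' case being excluded because it produces a second pendant. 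Feeding into the hypothesis the nonadjacent pairs $\{w_i,v\}$, the pairs $\{w_i,w_j\}$, and (treating each $2$-connected block at $v$ via a small-degree witness in it) the pairs of such witnesses with the $w_i$, then summing over $i$ and over the blocks at $v$ and comparing with the trivial bound $1+\sum_i|C_i|+\sum_{B\ni v}(|B|-1)\le n$, one reaches a numerical contradiction for all $n$ above an absolute constant, leaving only a bounded range of small orders.

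\textbf{The main obstacle is precisely this last estimate.} The inequalities are tight enough that the witnesses $w_i$ must be chosen carefully (inside leaf blocks, not arbitrarily in $C_i$), the case where $v$ additionally lies in large $2$-connected blocks must be tracked, and the short list of small $n$ (roughly $n\le 16$) must be checked by hand. Finally, that $\frac{n+6}{4}$ cannot be lowered is witnessed by a vertex joined by three bridges to three balanced complete bipartite graphs, which has $pc=3$ while missing the bound only marginally; thus the claim, and with it Theorem~\ref{thm4.1}, is best possible.
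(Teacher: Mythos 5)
Your reduction is to the wrong structural statement, and this breaks the proof even if your Claim were fully established. What the argument needs is that the bridge-block tree $G^*$ has maximum degree at most $2$ (no \emph{block} meets three or more bridges), which is what the paper proves and then feeds into Corollary \ref{cor2.1}. Your Claim --- that no \emph{vertex} meets three or more bridges --- is strictly weaker, and the ``routine colouring'' you build on it does not exist in general. Concretely, take a $4$-cycle $a_1b_1a_2b_2$ and attach a copy of $K_{3,3}$ by a bridge at each of $a_1$, $b_1$, $a_2$. No vertex is incident with more than one bridge, yet $pc=3$: for any $2$-colouring of the $C_4$, the attainable $(start,end)$ pairs of proper $a_1$--$b_1$, $b_1$--$a_2$ and $a_1$--$a_2$ paths force the three bridge colours $\alpha,\beta,\gamma$ to satisfy $\alpha=\beta$, $\beta=\gamma$ and $\alpha\neq\gamma$ simultaneously, which is impossible. (This graph violates the degree-sum hypothesis, but it refutes the implication ``Claim $\Rightarrow pc=2$'': a $v$-good or strong-property colouring of a block with several cut vertices guarantees only two of the four $(start,end)$ pairs between two of its cut vertices, which is exactly the ``$C_4$ subtlety'' you flag, and it is not repairable by rooting and swapping colours.)

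Second, the counting that is supposed to prove your Claim is not carried out, and the version you sketch does not close. Using witnesses $w_i$ with $d(w_i)\le |C_i|/2$ and the pairs $\{w_i,w_j\}$ yields $\sum_i|C_i|\ge \frac{3(n+6)}{4}$, which is perfectly compatible with $\sum_i|C_i|\le n-1$ for all $n\ge 22$; so the promised ``numerical contradiction for all $n$ above an absolute constant'' does not materialize from these pairs, and the problem is for large $n$, not just a finite list of small orders. The contradiction has to come from the pairs $\{v,w_i\}$ together with a lower bound on the size of the block containing $v$ when $d(v)$ is large --- that is, exactly the case analysis in the paper (whether a minimum-degree vertex lies in a leaf-block, and whether the degree-$3$ node of $G^*$ is a singleton). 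I would restructure as the paper does: dispose of $\delta(G)\ge\frac{n+6}{8}$ via Theorem \ref{thm2.5}, then assume $\Delta(G^*)\ge 3$ and bound from below the sizes of the leaf-blocks hanging off a degree-$3$ node of $G^*$ to exceed $n$, and finish with Corollary \ref{cor2.1}.
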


The following example shows that the degree sum condition in Theorems \ref{thm3.1} are best possible. Let $G_1$ be a complete graph on $3$ vertices and $G_i$ be a complete graph with $\frac{n-3}{2}$ vertices ($n\geq5$) for $i=2,3$. Then, take a vertex $v_i\in G_i$ for each $1\leq i \leq 3$. Let $G$ be a graph obtained from $G_1\cup G_2 \cup G_3$ by joining $v_1$ and $v_i$ with an edge for $i=2,3$. It is easy to see that $d(x)+d(y)\geq \frac{n}{2}-1$ for every pair of nonadjacent vertices $x,y\in V(G)$, but $pc(G)=3$.

To show that the degree condition in Theorem \ref{thm4.1} is best possible, we construct the following example. Let $G_i$ be a complete bipartite graph such that each part has $3$ vertices and $v_0$ a vertex not in $G_i$ for $i=1,2,3$. Then, take a vertex $v_i\in G_i$ for each $1\leq i \leq 3$. Let $G$ be a bipartite graph obtained from $G_1\cup G_2 \cup G_3$ by joining $v_0$ and $v_i$ with an edge for each $1\leq i \leq 3$. It is easily checked that $d(x)+d(y)\geq \frac{n}{4}+1$ for every pair of nonadjacent vertices $x,y\in V(G)$, but $pc(G)=3$.

\section{Preliminaries}

At the beginning of this section, we present some basic concepts as follows.

\begin{df}\label{df1}
Given a colored path $P=v_1v_2\ldots v_{t-1}v_t$ between any two vertices $v_1$ and $v_t$, we define start$(P)$ as the color of the first edge $v_1v_2$ in the path, and define end$(P)$ as the color of the last edge $v_{t-1}v_t$. In particular, if $P$ is just the edge $v_1v_t$, then $start(P)=end(P)=c(v_1v_t)$.
\end{df}

\begin{df}\label{df2}
Let $c$ be a proper connection coloring of $G$. We say that $G$ has the strong property under $c$ if for any pair of vertices $u,v\in V(G)$, there exist two proper paths $P_1,P_2$ from $u$ to $v$ (not necessarily disjoint) such that $start(P_1)\neq start(P_2)$ and $end(P_1)\neq end(P_2)$.
\end{df}

\begin{df}\label{df3}
Let $B\subseteq E$ be the set of cut-edges of a graph $G$. We denote by $\mathcal{D}$ the set of connected components of $G^{'}=(V,E\setminus B)$. There exist two types of elements in $\mathcal{D}$, singletons and connected bridgeless subgraphs of $G$. we construct a new graph $G^{*}$ that is obtained from contracting each element of $\mathcal{D}$ of $G^{'}$ to a vertex. It is well-known that $G^{*}$ is called the {\it bridge-block tree} of $G$. For the sake of simplicity, we call every element of $\mathcal{D}$ a {\it block} of $G$. In particular, an element of $\mathcal{D}$ which corresponds to a leaf in $G^{*}$ is called a {\it leaf-block} of $G$.
\end{df}

\begin{df}\label{df4}
A Hamiltonian path in a graph $G$ is a path containing every vertex of $G$. And a graph having a Hamiltonian path is called a traceable graph.
\end{df}

\begin{df}\label{df5}
Let $G$ be a graph with vertex set $V$. A vertex partition $V=V_1\cup V_2\cup\cdots \cup V_k$ is called equitable if any two parts differ in size by at most one.
\end{df}

Next, we state some fundamental results on proper connection coloring    which are used in the sequel.

\begin{lem}\label{lem2.1}\cite{ALLZ}
If $G$ is a nontrivial connected graph and $H$ is a connected spanning subgraph of $G$, then $pc(G)\le pc(H)$. In particular, $pc(G)\le pc(T)$ for every spanning tree $T$ of $G$.
\end{lem}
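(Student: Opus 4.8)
The plan is to extend an optimal proper connection coloring of $H$ to all of $G$ without adding any new colors. First I would fix a proper connection coloring $c_H$ of $H$ that uses exactly $pc(H)$ colors; this is legitimate because, as $G$ is a nontrivial connected graph and $H$ is a connected spanning subgraph, $H$ has at least two vertices and hence at least one edge, so $H$ is itself a nontrivial connected graph and $pc(H)$ is well defined. Then I would define an edge-coloring $c_G$ of $G$ by putting $c_G(e)=c_H(e)$ for every $e\in E(H)$ and assigning to each edge of $E(G)\setminus E(H)$ an arbitrary color from the palette $\{1,\dots,pc(H)\}$; the precise choice for these extra edges is irrelevant.

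The key step is to check that $c_G$ is a proper connection coloring of $G$. Since $H$ is spanning, $V(H)=V(G)$, so any two distinct vertices $u,v$ of $G$ are also distinct vertices of $H$; by the choice of $c_H$ there is a proper path $P$ from $u$ to $v$ inside $H$. Because $P$ uses only edges of $H$ and $c_G$ coincides with $c_H$ on those edges, $P$ is still a proper path from $u$ to $v$ in the edge-colored graph $(G,c_G)$. Hence every pair of vertices of $G$ is joined by a proper path under $c_G$, which uses at most $pc(H)$ colors, and therefore $pc(G)\le pc(H)$.

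For the ``in particular'' clause I would simply observe that a spanning tree $T$ of $G$ is in particular a connected spanning subgraph of $G$, so the inequality just proved applies with $H=T$ and yields $pc(G)\le pc(T)$.

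I do not expect any real obstacle here: the only point deserving a line of care is the claim that a proper path of $H$ remains a proper path of $G$ under $c_G$, and this is immediate since whether a path is proper depends only on the colors of its own consecutive edges, which are left unchanged by the extension.
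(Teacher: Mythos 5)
Your proof is correct and is exactly the standard argument for this fact: extend an optimal coloring of $H$ to $G$ arbitrarily and note that every proper path of $H$ survives unchanged. The paper itself cites this lemma from the literature without giving a proof, so there is nothing to compare beyond noting that your argument is the expected one.
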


\begin{lem}\label{lem2.4}\cite{ALLZ}
If $G$ be a traceble graph that is not complete, then $pc(G)=2$.
\end{lem}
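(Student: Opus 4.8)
The plan is to squeeze $pc(G)$ between $2$ and $2$. The lower bound is free: since $G$ is not complete, the characterization recalled in the introduction ($pc(G)=1$ if and only if $G=K_n$) gives $pc(G)\ge 2$. So the whole content is the upper bound $pc(G)\le 2$, and for this I would use that a Hamiltonian path is in particular a spanning tree, so that Lemma~\ref{lem2.1} applies directly.

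Concretely, let $P=v_1v_2\cdots v_n$ be a Hamiltonian path of $G$; note that a noncomplete traceable graph must have $n\ge 3$, since the only traceable graphs on $1$ or $2$ vertices, namely $K_1$ and $K_2$, are complete. Color the edges of $P$ alternately, i.e. $c(v_iv_{i+1})=1$ for $i$ odd and $c(v_iv_{i+1})=2$ for $i$ even. Then every two consecutive edges of $P$ receive distinct colors, hence every subpath $v_iv_{i+1}\cdots v_j$ of $P$ is a proper path; as $P$ is spanning, $P$ equipped with this coloring is proper connected, so $pc(P)\le 2$. (In fact $pc(P)=2$: the endpoints $v_1,v_n$ are joined in $P$ only by $P$ itself, which forces a proper edge-coloring of the whole path, and that needs $2$ colors once $n\ge 3$.) Applying Lemma~\ref{lem2.1} with the spanning tree $T=P$ yields $pc(G)\le pc(P)=2$, and combined with $pc(G)\ge 2$ we conclude $pc(G)=2$. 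Alternatively, one can avoid Lemma~\ref{lem2.1} and color $G$ directly: use the alternating coloring on the edges of $P$ and give every remaining edge color $1$; then for any $v_i,v_j$ the subpath of $P$ between them is a proper path, so this $2$-coloring is a proper connection coloring of $G$.

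There is essentially no real obstacle here; the only point deserving a line of care is the degenerate range of $n$, handled above by observing that a noncomplete traceable graph has at least $3$ vertices, so that $P$ has at least two edges and the alternating pattern genuinely uses two colors, with the case $n=3$ (where $G=P_3$) checked directly.
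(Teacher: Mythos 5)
Your argument is correct, and it is exactly the standard proof of this fact (the paper itself gives no proof, citing it from \cite{ALLZ}): alternate two colors along the Hamiltonian path so that every subpath is proper, giving $pc(G)\le 2$ via Lemma~\ref{lem2.1}, while noncompleteness forces $pc(G)\ge 2$. Your care about the degenerate cases $n\le 2$ is fine but harmless either way, since those graphs are complete and thus excluded by hypothesis.
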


\begin{lem}\label{lem2.3}\cite{ALLZ}
Let $G$ be a connected graph and $v$ a vertex not in $G$. If $pc(G)=2$, then $pc(G\cup v)=2$ as long as $d(v)\geq 2$, that is, there are at least two edges connecting $v$ to $G$.
\end{lem}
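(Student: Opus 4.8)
The plan is to start from a proper connection $2$-coloring $c: E(G)\to\{1,2\}$ witnessing $pc(G)=2$ and to extend it to a $2$-coloring $c'$ of $G\cup v$, keeping $c$ unchanged on $E(G)$ and only choosing colors for the edges joining $v$ to $G$. Proper paths lying inside $G$ are unaffected, so the only new pairs to serve are $\{v,w\}$ with $w\in V(G)$; and if $w$ is a neighbor of $v$ it is already reached by an edge, so assume it is not. Every proper $v$--$w$ path must then have the form $v,u,\dots,w$ where $u\in N(v)$ and the part from $u$ to $w$ is a proper $u$--$w$ path of $G$ whose first edge has a color different from $c'(vu)$. Since only two colors occur, every proper path alternates colors and is determined, as far as properness is concerned, by the color of its first edge; so I would record, for each neighbor $u$ of $v$, the nonempty set $S_u(w)\subseteq\{1,2\}$ of possible starting colors of proper $u$--$w$ paths of $G$.

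Next I would fix two neighbors $u_1,u_2$ of $v$ (they exist since $d(v)\ge 2$) and color any remaining edges at $v$ with color $1$ (extra neighbors only add more options). Then $w$ is joined to $v$ by a proper path leaving through $u_1$ or $u_2$ exactly when $\overline{c'(vu_1)}\in S_{u_1}(w)$ or $\overline{c'(vu_2)}\in S_{u_2}(w)$, where $\bar 1=2$ and $\bar 2=1$. There are only four choices for the ordered pair $(c'(vu_1),c'(vu_2))$, and a routine check shows that each choice fails only on a single ``obstruction class'' of vertices, namely the $w$ for which both $S_{u_1}(w)$ and $S_{u_2}(w)$ are singletons of prescribed values -- for instance the choice $(1,2)$ fails precisely at those $w$ with $S_{u_1}(w)=\{1\}$ and $S_{u_2}(w)=\{2\}$, and the other three are analogous. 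Hence some choice of $c'$ works unless all four obstruction classes are simultaneously nonempty, so the crux is to rule this out; here one has freedom both in which two neighbors of $v$ to single out and, since the sets $S_{u}$ depend on $c$, in which proper $2$-coloring of $G$ to start from.

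To rule out the simultaneous obstruction I would argue by contradiction. Using $pc(G)=2$, fix a proper $u_1$--$u_2$ path $R$ in $G$; for a vertex $w$ in a supposedly uncovered class, concatenating $R$ (or a suitable subpath of it) with a proper path from $u_1$ or from $u_2$ to $w$ should yield a proper $u_1$--$w$ or $u_2$--$w$ path whose first color contradicts the singleton condition defining its class. \textbf{The main obstacle is that such a concatenation is only a proper \emph{walk}:} pruning a proper walk down to a proper path can destroy the color alternation at the splicing vertex, so the argument must take $R$ and the auxiliary paths as short as possible (which forces any overlap to be short) and reroute along an overlap, exploiting the rigidity of the two-color alternation pattern. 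A quick shortcut is available whenever $G$ admits a $2$-coloring with the strong property of Definition~\ref{df2}: then, coloring $vu_1$ with $1$ and $vu_2$ with $2$, for each $w$ the strong property supplies proper $u_1$--$w$ paths of both starting colors, one of which extends through $v$; but since not every connected graph with $pc(G)=2$ carries such a coloring, the general case still needs the walk-to-path analysis above.
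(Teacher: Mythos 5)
The paper offers no proof of this lemma at all---it is imported from \cite{ALLZ}---so your argument has to stand on its own, and as written it does not: it is a proof plan with the decisive step missing. The setup is sound. Extending a proper connection $2$-coloring $c$ of $G$ by coloring only the edges at $v$, recording for each neighbour $u$ of $v$ the nonempty set $S_u(w)$ of admissible start colors of proper $u$--$w$ paths, and observing that the choice $(c'(vu_1),c'(vu_2))=(a,b)$ fails exactly on the class of vertices $w$ with $S_{u_1}(w)=\{a\}$ and $S_{u_2}(w)=\{b\}$ is all correct (and the remark that further neighbours of $v$ can only help is fine). But this reduces the lemma to the single claim that, for a suitable choice of the coloring of $G$ and of the two distinguished neighbours, at least one of the four obstruction classes is empty---and that claim is exactly what you do not prove. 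You describe a contradiction argument by concatenating a proper $u_1$--$u_2$ path with proper paths into the obstruction classes, and you yourself name the reason it does not go through: the concatenation is only a proper \emph{walk}, and a proper walk need not contain a proper path between its endpoints (take edges $ab,bc,cd,db,be$ colored $1,2,1,2,1$: the walk $a,b,c,d,b,e$ is proper, but the only $a$--$e$ path is $a,b,e$ with colors $1,1$). ``Reroute along the overlap, exploiting the rigidity of the two-color alternation'' is a hope, not an argument; the sets $S_u(\cdot)$ are a global feature of the coloring that such local surgery is not shown to control.

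Two further cautions. First, it is not established that for a \emph{fixed} proper connection $2$-coloring of $G$ the four classes cannot all be nonempty, so a correct completion may genuinely have to recolor $G$, or even use a $2$-coloring of $G\cup v$ whose restriction to $G$ is not a proper connection coloring of $G$ (pairs inside $G$ may then be joined through $v$); your framework silently excludes that last possibility. Second, your shortcut via the strong property of Definition~\ref{df2} is correct as far as it goes, and you are right that it does not cover the general case (already $P_3$ has $pc=2$ but admits no strong-property $2$-coloring, since between adjacent vertices there is only one path); but that observation only confirms that the case you leave open is the whole difficulty. As it stands, the proposal identifies the crux accurately and then stops short of it.
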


\begin{lem}\label{lem2.2}\cite{BFGMMMT}
If $G$ is a bipartite connected bridgeless graph, then $pc(G)= 2$. Furthermore, there exists a 2-edge-coloring $c$ of $G$ such that $G$ has the strong property under $c$.
\end{lem}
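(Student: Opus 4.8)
The plan is to prove the stronger conclusion — the existence of a strong $2$-coloring — by induction along an ear decomposition of $G$, with the entire argument resting on one structural observation about bipartite graphs that trivializes the bookkeeping. Since $G$ is connected and bridgeless it is $2$-edge-connected, so it admits a (closed) ear decomposition $G=C_0\cup P_1\cup\cdots\cup P_k$, where $C_0$ is a cycle and each $P_i$ is an open or closed ear whose endpoints lie in $G_{i-1}:=C_0\cup\cdots\cup P_{i-1}$ and whose internal vertices are new. As $G$ is bipartite, $C_0$ is even and each ear respects the fixed bipartition $(X,Y)$.

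The key observation is a parity invariant. Fix any edge-coloring of $G$ with colors $\{1,2\}$. A proper path must strictly alternate between the two colors, so a proper path $P$ with $\ell$ edges satisfies $end(P)=start(P)$ when $\ell$ is odd and $end(P)=\overline{start(P)}$ when $\ell$ is even. In a bipartite graph the parity of the length of every $u$--$v$ path is determined solely by whether $u,v$ lie in the same part of $(X,Y)$. Hence, in any $\{1,2\}$-coloring and for every proper $u$--$v$ path, $end(P)$ is a fixed function of $start(P)$ and of the sides of $u,v$. In particular, the strong property is equivalent to the weaker-looking statement: for every pair $u,v$ there exist two proper $u$--$v$ paths with distinct start colors, because distinct starts then force distinct ends automatically.

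With this reduction the induction is routine. For the base case, color $C_0$ properly by alternating two colors; since the two edges at every vertex differ, the two arcs between any pair give distinct starts, so $G_0=C_0$ has the strong property. Assume $G_{i-1}$ carries a strong $2$-coloring and add the ear $P_i$ as a (forced) alternating proper path. Pairs inside $G_{i-1}$ keep their old paths. For a new internal vertex $w$ of $P_i$ and an old vertex $v$, I route $w$ to the two attachment vertices $a,b$ of the ear along the ear — the two ear-edges at $w$ differ, giving two distinct starts at $w$ — and then continue with $a$--$v$ and $b$--$v$ proper paths in $G_{i-1}$; the reduced invariant for $G_{i-1}$ supplies such paths with whatever start color is needed to mesh properly at $a$ and at $b$. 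Pairs of new vertices on the same ear are treated the same way, using the direct ear subpath for one path and the detour through $G_{i-1}$ between the two attachments for the other; here the parity tie linking the ear's length, its two attachment colors, and the parts of $a,b$ makes the two meshing constraints at $a$ and $b$ collapse into a single satisfiable one, so the detour always exists. Closed ears are identical, with both attachments equal to one vertex but reached in two different colors.

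The only genuine obstacle is the one the parity invariant dissolves: a priori, extending a strong coloring across an ear seems to require independently prescribing both the start and the end color of the connecting paths, which two colors cannot generally do. Establishing the invariant — that in a bipartite graph the end color of a proper path is forced by its start color together with the bipartition classes of its endpoints — is therefore the crux; once it is in place, maintaining the strong property reduces to the bookkeeping of start colors, which the alternation of each ear handles directly. Finally, $pc(G)=2$ follows because a strong $2$-coloring is in particular a proper connection coloring, while a connected bridgeless bipartite graph contains an even cycle and is thus noncomplete, forcing $pc(G)\ge 2$.
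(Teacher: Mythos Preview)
The paper does not prove this lemma at all: it is quoted verbatim from Borozan et al.\ \cite{BFGMMMT} and used as a black box, so there is no ``paper's own proof'' to compare against. Your argument is therefore being judged on its own.

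Your proof is correct, and the ear-decomposition approach with the parity invariant is essentially the argument in the cited source. The crucial step --- that in a bipartite graph any two $u$--$v$ paths have the same length parity, so for a $\{1,2\}$-proper path $end(P)$ is a fixed bijection of $start(P)$ --- is exactly what makes the induction close. I checked the one place that could fail, the detour for two internal ear vertices $w_1,w_2$: writing the ear as $a=u_0,u_1,\dots,u_m=b$ with alternating edge colors $\epsilon_1,\dots,\epsilon_m$, the detour needs an $a$--$b$ proper path $Q$ in $G_{i-1}$ with $start(Q)\neq\epsilon_1$ and $end(Q)\neq\epsilon_m$; taking $start(Q)=\overline{\epsilon_1}$ and using that $m$ is even iff $a,b$ lie in the same part iff $\epsilon_m=\overline{\epsilon_1}$, one verifies $end(Q)\neq\epsilon_m$ in both parity cases. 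So your ``collapse into a single satisfiable constraint'' claim is accurate. The closed-ear case is even easier since the properly $2$-colored even cycle already supplies both arcs. The final line about $pc(G)\ge 2$ is also fine: a connected bridgeless bipartite graph on at least two vertices has at least four vertices and is not complete.
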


The following result is an immediate consequence of Lemma \ref{lem2.2}.

\begin{cor}\label{cor2.1}
Let $G$ be a connected bipartite graph and $G^*$ be the bridge-block tree of $G$. If $\Delta(G^*)\leq 2$, then $pc(G)=2$.
\end{cor}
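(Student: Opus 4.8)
The plan is to leverage the hypothesis $\Delta(G^{*})\le 2$. Since $G^{*}$ is a tree, this forces $G^{*}$ to be a path, say with vertices $D_{1}D_{2}\cdots D_{k}$, where each $D_{i}$ is a block of $G$ (a singleton or a connected bridgeless bipartite subgraph) and consecutive blocks $D_{i},D_{i+1}$ are joined by a single bridge $e_{i}=x_{i}y_{i+1}$ with $x_{i}\in D_{i}$, $y_{i+1}\in D_{i+1}$; for an internal block $D_{\ell}$ ($2\le\ell\le k-1$) write $y_{\ell}$ and $x_{\ell}$ for the ends in $D_{\ell}$ of $e_{\ell-1}$ and $e_{\ell}$, noting that $y_{\ell}=x_{\ell}$ is possible (and is forced when $D_{\ell}$ is a singleton). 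I would build a $2$-coloring of $G$ in two stages: first color the edges inside each bridgeless block $D_{i}$ using Lemma~\ref{lem2.2}, so that $D_{i}$ has the strong property; then choose colors for the bridges $e_{1},\dots,e_{k-1}$ so that proper paths can be threaded along the whole chain. (One may assume $G\notin\{K_{1},K_{2}\}$, which is the only way a connected bipartite graph with $\Delta(G^{*})\le 2$ can be complete; then $pc(G)\ge 2$ automatically, and it suffices to exhibit a proper connection $2$-coloring.)

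The delicate point is that two colors are not enough for the strong property of a block $D_{\ell}$ to let us prescribe \emph{both} the start-color and the end-color of a proper $y_{\ell}$--$x_{\ell}$ path: if $P_{1},P_{2}$ are the paths supplied by Definition~\ref{df2}, then $\{(start(P_{i}),end(P_{i})):i=1,2\}$ is either $\{(1,1),(2,2)\}$ or $\{(1,2),(2,1)\}$, so exactly one of ``start equals end'' and ``start differs from end'' is guaranteed realizable between $y_{\ell}$ and $x_{\ell}$. Accordingly, for each internal block $D_{\ell}$ I would record a requirement on the pair $(c(e_{\ell-1}),c(e_{\ell}))$: namely $c(e_{\ell-1})=c(e_{\ell})$ in the first case and $c(e_{\ell-1})\ne c(e_{\ell})$ in the second (and simply $c(e_{\ell-1})\ne c(e_{\ell})$ when $y_{\ell}=x_{\ell}$, so that $e_{\ell-1}e_{\ell}$ is properly colored at that vertex). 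Each requirement relates two \emph{consecutive} bridges along the path $G^{*}$, so the whole system is acyclic; hence one can fix $c(e_{1})$ arbitrarily and then propagate a legal color to $e_{2},e_{3},\dots,e_{k-1}$ one at a time. The end-blocks $D_{1}$ and $D_{k}$ impose no constraint, since for them only one color (an end-color at $D_{1}$, a start-color at $D_{k}$) ever needs to be avoided, which the strong property always allows.

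With the coloring fixed, I would verify proper connectedness directly. Take $x\in D_{i}$ and $y\in D_{j}$, say $i\le j$. If $i=j$, a proper $x$--$y$ path exists inside $D_{i}$ by its strong property (trivial if $D_{i}$ is a singleton). If $i<j$, concatenate: (1) a proper $x$--$x_{i}$ path inside $D_{i}$ whose end-color is $\ne c(e_{i})$ (strong property of $D_{i}$; trivial if $x=x_{i}$); (2) the bridge $e_{i}$; (3) for each $\ell$ with $i<\ell<j$, a proper $y_{\ell}$--$x_{\ell}$ path inside $D_{\ell}$ with start-color $\ne c(e_{\ell-1})$ and end-color $\ne c(e_{\ell})$ --- available exactly because of the requirement imposed on $(c(e_{\ell-1}),c(e_{\ell}))$ in the previous step, and trivial precisely when $y_{\ell}=x_{\ell}$ --- followed by the bridge $e_{\ell}$; (4) a proper $y_{j}$--$y$ path inside $D_{j}$ with start-color $\ne c(e_{j-1})$ (strong property of $D_{j}$; trivial if $y_{j}=y$). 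Because the blocks are pairwise vertex-disjoint and attach to the bridges at the listed vertices, the concatenation is a genuine $x$--$y$ path, and the color comparisons at every bridge--block junction are precisely the inequalities built into (1)--(4); hence the path is proper. Therefore the $2$-coloring makes $G$ proper connected, and $pc(G)=2$.

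I expect the only real obstacle to be the bookkeeping in the second paragraph --- reconciling the two-color weakness of the strong property across all the blocks at once; the observation that $\Delta(G^{*})\le 2$ forces $G^{*}$ to be a path, and hence turns the bridge-color requirements into a conflict-free chain, is exactly what makes this go through cleanly.
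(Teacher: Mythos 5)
Your proof is correct and follows essentially the same route as the paper: decompose $G$ along the path $G^{*}$, color each bridgeless block via Lemma~\ref{lem2.2} so that it has the strong property, and then choose the bridge colors so that the pieces mesh at the junctions. Your explicit bookkeeping of the start/end constraints at each internal block, and the observation that they chain acyclically along the path, is in fact more careful than the paper's inductive sketch, which glosses over exactly this point.
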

\begin{proof}
The proof proceeds by induction on the number of bridges of $G$. If $G$ has no bridge, the result trivially holds by Lemma \ref{lem2.2}. Suppose that the result holds for every connected bipartite graph $H$ with $r\geq0$ bridges and $\Delta(H^*)\leq 2$. Let $G$ be a connected bipartite graph with $r+1$ bridges and $\Delta(G^*)\leq 2$. Note that $G^*$ is a path. Thus, assume that $G^*=B_1b_1B_2b_2\ldots B_rb_rB_{r+1}b_{r+1}B_{r+2}$ is a path, where $b_i$ is a bridge of $G$ for $i=1,2,\cdots,r+1$ and $B_i$ is a block of $G$ for $i=1,2,\cdots,r+2$. Let $H$ be the subgraph of $G$ such that $H^*=B_1b_1B_2b_2\ldots B_rb_rB_{r+1}$. By the induction hypothesis, there exists a proper connection 2-coloring $c$ of $H$ with colors \{1,2\}. In order to form a proper connection 2-coloring of $G$ with colors \{1,2\}, we only need to color the edges in $E(G)\setminus E(H)$. Without loss of generality, assume that $c(b_r)=1$ under $c$. Let $b_r=uv$ and $b_{r+1}=wz$ with $v,w\in B_{r+1}$. If $v=w$, then color the edge $b_{r+1}$ with color 2. If $v\neq w$, then there exists a proper path $P$ between $v$ and $w$ in $B_{r+1}$ under $c$, such that $start(P)=2$. Next, we color the edge $b_{r+1}$ satisfying $c(b_{r+1})\neq end(P)$. At last, if $B_{r+2}$ is not a singleton, applying Lemma \ref{lem2.2} to $B_{r+2}$, there exists a proper connection 2-coloring $c'$ of $B_{r+2}$ with colors \{1,2\} such that $B_{r+2}$ has the strong property under $c'$.
\end{proof}

\begin{thm}\label{thm2.4}\cite{HLQ}
Let $G$ be a connected noncomplete graph of order $n\ge5$. If $G \notin \{G_2,G_3\}$ shown in Figure 1, and $\delta(G)\geq n/4$, then $pc(G)=2$.
\end{thm}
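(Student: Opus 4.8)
To prove this, I would route everything through the bridge-block tree $G^*$ of $G$, using $\delta(G)\ge n/4$ to make $G^*$ very small.

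First the easy case: if $G$ is traceable then, since it is noncomplete, $pc(G)=2$ by Lemma~\ref{lem2.4}, so from now on assume $G$ is not traceable. As $n\ge5$ we have $\delta(G)\ge n/4>1$, hence $\delta(G)\ge2$; thus $G$ has no pendant vertex, $G^*$ has no singleton block, and every leaf-block $B$ of $G^*$ with cut-vertex $v_B$ satisfies $N_G(u)\subseteq V(B)$ for each $u\in V(B)\setminus\{v_B\}$, whence $|B|\ge\delta(G)+1\ge\frac n4+1$. Next I would use a standard toughness-type argument: a connected non-traceable graph has a nonempty cut-set $S$ for which $G-S$ has at least $|S|+2$ components $C_1,\dots,C_k$, and since a vertex of $C_i$ sends all its edges into $C_i\cup S$ we get $|C_i|\ge\delta(G)-|S|+1$. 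Combining this with $\sum_i|C_i|\le n-|S|$ and with the leaf-block estimate forces $|S|$ and $k$ to be small, each $C_i$ to be large, and --- feeding the size bounds back in --- each $G[C_i]$ to have minimum degree at least $(|C_i|-1)/2$, hence to be traceable. The net effect is that, apart from finitely many small graphs to be checked by hand, $G^*$ is one of a short list of shapes: a single block; a path of boundedly many blocks; or a small spider with three legs. In particular a $G^*$ that would force three distinct bridge colours at one vertex is excluded here, since three disjoint leaf-blocks of order $>\frac n4$ together with a centre cannot fit into $n$ vertices.

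With $G^*$ reduced to bounded size, I would colour case by case. When a relevant block is bipartite and bridgeless, Lemma~\ref{lem2.2} gives a $2$-colouring with the strong property of Definition~\ref{df2}; if $G^*$ is a path of such blocks the bridge colours can be fixed at the cut-vertices to avoid clashes exactly as in the proof of Corollary~\ref{cor2.1}; and whenever a large piece is traceable, Lemma~\ref{lem2.4} together with Lemma~\ref{lem2.3} --- attaching the remaining vertices, each of degree $\ge2$, one at a time --- closes that branch. What is left is a finite family of small dense graphs, inspected individually; the two admitting no $2$-colouring turn out to be exactly $G_2$ and $G_3$.

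The main obstacle is the dense \emph{non-bipartite} case. Unlike in Lemma~\ref{lem2.2}, a $2$-connected non-bipartite block need not admit a $2$-colouring with the strong property --- an odd cycle already obstructs it --- so the gluing argument of Corollary~\ref{cor2.1} does not transfer verbatim. One then has to either extract a spanning \emph{connected bridgeless bipartite} subgraph from the degree hypothesis and apply Lemmas~\ref{lem2.1} and~\ref{lem2.2}, or colour the bounded configuration directly while tracking how a single ``odd'' block interacts with the incident bridges. Carrying out this non-bipartite analysis, and confirming that $G_2$ and $G_3$ are the only graphs it fails to resolve, is where essentially all the work lies.
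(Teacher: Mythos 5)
This statement is imported from \cite{HLQ} as a black box; the paper contains no proof of it, so your sketch has to stand on its own, and it does not. The most basic problem is a false implication at the heart of your reduction: a connected non-traceable graph need \emph{not} contain a cut set $S$ with at least $|S|+2$ components in $G-S$. That condition (``$c(G-S)\le|S|+1$ for all $S$'') is necessary for traceability, and you are invoking its (false) converse; you give no argument that the hypothesis $\delta(G)\ge n/4$ rescues it, so the entire component-counting step that is supposed to shrink $G^{*}$ and make each $G[C_i]$ Dirac-dense has no foundation. A second, smaller error: three leaf-blocks of order $\ge n/4+1$ together with a centre occupy only about $3n/4+4\le n$ vertices once $n\ge16$, so your one-line arithmetic does not exclude the three-legged spider. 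The configuration is in fact impossible, but only after one also argues about the central block --- either it is a singleton, whence its degree in $G$ equals its degree in $G^{*}$ and is $<n/4$, or it is a bridgeless block of order roughly $\ge\delta(G)$, and only then does the count exceed $n$ (this is exactly the $B_0$ analysis in Cases 1--2 of the proofs of Theorems \ref{thm3.1} and \ref{thm4.1} in this paper). You omit that step.

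The deeper issue is that the exceptional graphs $G_2,G_3$ are not bridge phenomena at all, so the bridge-block tree is the wrong lens for the hard part of this theorem. With $\delta(G)\ge n/4\ge 7/4$ every leaf-block is a non-singleton bridgeless block on at least $3$ vertices, and a degree-$3$ vertex of $G^{*}$ would already force at least $1+3\cdot3=10$ vertices; since the exceptions live on $7$ and $8$ vertices, they cannot arise from three bridges at one block. They arise instead from \emph{odd, non-bipartite} blocks meeting at a cut vertex (a friendship-graph-type obstruction), which is invisible to $G^{*}$ and precisely the situation in which Lemma \ref{lem2.2}'s strong property and the gluing of Corollary \ref{cor2.1} are unavailable. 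You correctly identify this non-bipartite analysis --- including the verification that $G_2$ and $G_3$ are the only failures --- as ``where essentially all the work lies,'' but you do not carry it out. As written, the proposal is a plan whose decisive steps are either missing or rest on false lemmas, not a proof.
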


\begin{thm}\label{thm2.5}\cite{HLQ}
Let $G$ be a connected bipartite graph of order $n\ge4$. If $\delta(G)\geq\frac{n+6}{8}$, then $pc(G)=2$.
\end{thm}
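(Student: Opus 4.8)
The plan is to pass to the bridge-block tree $G^*$ of $G$ and to show that the degree-sum hypothesis already forces $\Delta(G^*)\le 2$; once this is done, Corollary~\ref{cor2.1} immediately gives $pc(G)=2$. So the entire task is to rule out $\Delta(G^*)\ge 3$, and I would spend the proof on a structural analysis of the leaf-blocks of $G$ under that assumption.

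Suppose $\Delta(G^*)\ge 3$. Since $G^*$ is a tree with a vertex of degree at least $3$, it has at least three leaves, so $G$ has at least three leaf-blocks. I would first rule out singleton leaf-blocks. Two singleton leaf-blocks would form a nonadjacent pair of degree-$1$ vertices with $2\ge\frac{n+6}{4}$, forcing $n\le 2$; and if $\{w\}$ were the unique singleton leaf-block, then for any other leaf-block $L$ (there are at least two, all nonsingleton) and any vertex $x\in L$ other than the endpoint $u$ of the bridge at $L$, the vertex $x$ is nonadjacent to $w$ and has all its neighbours in $L$, so $d_L(x)=d(x)\ge\frac{n+6}{4}-1=\frac{n+2}{4}$; as $L$ is $2$-edge-connected and bipartite each of its two parts has at least two vertices, hence a vertex $\ne u$, which forces each part of $L$ to have at least $\frac{n+2}{4}$ vertices and so $|L|\ge\frac{n+2}{2}$, whence two such blocks already contain more than $n$ vertices — a contradiction. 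Next I would bound the number of leaf-blocks. For nonsingleton leaf-blocks $L_i$ with larger part $P_i$, smaller part $Q_i$ and bridge-endpoint $u_i$, choose $x_i\in P_i\setminus\{u_i\}$ (possible since $|P_i|\ge 2$); then $d(x_i)=d_{L_i}(x_i)\le|Q_i|$, and for $i\ne j$ the vertices $x_i,x_j$ lie in distinct blocks and are not bridge-endpoints, hence are nonadjacent, so $|Q_i|+|Q_j|\ge\frac{n+6}{4}$. Summing this over all pairs among four or more leaf-blocks gives $\sum|Q_i|\ge\frac{n+6}{2}$, which is impossible since $\sum|Q_i|\le\frac12\sum|L_i|\le\frac n2$. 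Hence $G$ has exactly three leaf-blocks $L_1,L_2,L_3$; since a tree with exactly three leaves has a unique vertex of degree $3$ and all other vertices of degree at most $2$, the tree $G^*$ is a ``spider'' with a central degree-$3$ block $B_0$ and three legs ending at $L_1,L_2,L_3$.

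The last step is a counting contradiction. Adding the three inequalities $|Q_i|+|Q_j|\ge\frac{n+6}{4}$ gives $\sum_{i=1}^{3}|Q_i|\ge\frac{3(n+6)}{8}$; since $|L_i|\ge 2|Q_i|$ and $B_0,L_1,L_2,L_3$ are pairwise disjoint, $|B_0|\le n-\sum|L_i|\le n-\frac{3(n+6)}{4}=\frac{n-18}{4}$. If $B_0$ is a nonsingleton block, it is bipartite and $2$-edge-connected, so both parts $A,B$ of $B_0$ have at least two vertices; taking nonadjacent $w_1,w_2\in A$ and using that $w_1,w_2$ are together incident with at most three bridges (all bridges at $B_0$), we get $d(w_1)+d(w_2)\le 2|B|+3$, so $|B|\ge\frac{n-6}{8}$, and symmetrically $|A|\ge\frac{n-6}{8}$, giving $|B_0|\ge\frac{n-6}{4}$ and contradicting $|B_0|\le\frac{n-18}{4}$. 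If $B_0=\{v\}$ is a singleton, then $d(v)=3$ and $v$ is nonadjacent to every $x_i\in L_i\setminus\{u_i\}$, so $d(x_i)\ge\frac{n+6}{4}-3=\frac{n-6}{4}$ and, as before, $|L_i|\ge\frac{n-6}{2}$; then $\frac{3(n-6)}{2}\le\sum|L_i|\le n-1$ forces $n\le 16$, while $\sum|Q_i|\ge\frac{3(n+6)}{8}$ together with $\sum|Q_i|\le\frac12\sum|L_i|\le\frac{n-1}{2}$ forces $n\ge 22$ — again a contradiction. Hence $\Delta(G^*)\le 2$ and $pc(G)=2$.

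I expect the main obstacle to be the bookkeeping in this last stage: one must select, inside each relevant block, pairs of genuinely nonadjacent vertices whose degree in $G$ is controlled by their degree inside the block (up to the at most three incident bridges), keep the singleton and nonsingleton cases for $B_0$ strictly apart, and use $2$-edge-connectedness and bipartiteness where needed (parts of order at least $2$, blocks of order at least $4$). Apart from possibly a few very small configurations to be checked by hand, everything upstream — reducing to $\Delta(G^*)\ge 3$ and then to the three-legged spider shape of $G^*$ — is routine once Corollary~\ref{cor2.1} is available.
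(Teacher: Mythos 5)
Your proof is correct. A preliminary remark: the paper does not prove this statement at all --- Theorem~\ref{thm2.5} is quoted from \cite{HLQ} and then used as a black box in the proof of Theorem~\ref{thm4.1} --- so the meaningful comparison is with the paper's proof of the degree-sum analogue, Theorem~\ref{thm4.1}. You share its skeleton (reduce to $\Delta(G^*)\le 2$ via Corollary~\ref{cor2.1}, then contradict $\Delta(G^*)\ge 3$ by counting vertices in the leaf-blocks and in the unique degree-$3$ block $B_0$ of the resulting three-legged spider), but your counting device is genuinely different and arguably cleaner: rather than fixing a minimum-degree vertex $x_0$ and splitting into cases according to whether $x_0$ lies in a leaf-block (the paper's Case~1/Case~2), you only ever invoke the symmetric inequality $d(x)+d(y)\ge\frac{n+6}{4}$ for nonadjacent $x,y$, applied to vertices picked in the larger parts of distinct leaf-blocks (so that $d(x_i)\le|Q_i|$, the size of the smaller part) and to two same-part vertices of $B_0$. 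Since $d(x)+d(y)\ge 2\delta(G)\ge\frac{n+6}{4}$, that hypothesis follows from the stated one, so the argument is valid for Theorem~\ref{thm2.5}; in fact you have proved the stronger Theorem~\ref{thm4.1} outright, and done so self-containedly, which the paper's own proof of Theorem~\ref{thm4.1} cannot claim since it calls on Theorem~\ref{thm2.5} to dispose of the case $\delta(G)\ge\frac{n+6}{8}$. I checked the individual estimates --- the elimination of singleton leaf-blocks, the bound $\sum_i|Q_i|\ge\frac{k(n+6)}{8}$ against $\sum_i|Q_i|\le\frac n2$ for $k\ge4$ leaf-blocks, the spider shape of $G^*$, and the two incompatible bounds on $|B_0|$ (respectively on $n$) in the nonsingleton and singleton cases --- and they all hold; the only step you leave implicit is the trivial passage from the minimum-degree hypothesis to the pairwise degree-sum inequality, which you should state explicitly at the outset.
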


\section{ Proof of Theorem \ref{thm3.1} }

\begin{proof}
The result trivially holds for $\delta(G)\geq\frac{n}{4}$ by Theorem \ref{thm2.4}. Next, we only need to consider $\delta(G)<\frac{n}{4}$ in the following. Let $X=\{x\mid d(x)=\delta(G)\}$. If $n=5$, then $\delta(G)=1$. Since $d(x)+d(y)\geq3$ for every pair of nonadjacent vertices $x,y\in V(G)$, it follows that $G$ has a Hamiltonian path. Thus, $pc(G)=2$ by Lemma \ref{lem2.4}. If $n=6$, then $\delta(G)=1$. Take a vertex $x_0$ with $d(x_0)=1$. Let $N(x_0)=\{y_0\}$, and $Y=V(G)\setminus \{x_0,y_0\}=\{y_1,y_2,y_3, y_4\}$. Since $d(x_0)+d(y_i)\geq3$ for $i=1,\cdots4$, we have that $d(y_i)\geq2$ for $i=1,\cdots4$. If there exists some $y_i$ with $d(y_i)\geq3$ for $1\leq i\leq4$, then it is easy to check that $pc(G)=2$. If $d(y_i)=2$ for $i=1,\cdots4$, then $G=G_1$ in Figure 1, a contradiction. Hence, it is sufficient to prove that the result holds for $\delta(G)<\frac{n}{4}$ and $n\geq7$. Note that if $G$ contains a bridgeless bipartite spanning subgraph $H_0$, then $pc(G)\leq pc(H_0)\leq2$ by Lemmas \ref{lem2.1} and \ref{lem2.2}. Hence, assume that every bipartite spanning subgraph of $G$ has a bridge. Let $H$ be a bipartite spanning subgraph of $G$ such that $H$ has the maximum number of edges, and $\Delta(H^{*})$ is as small as possible in the second place. If $\Delta(H^{*})\leq2$, $pc(G)\leq pc(H)=2$ by Corollary \ref{cor2.1}. Next, assume that $\Delta(H^{*})\geq3$. To prove our result, we present the following fact.

\noindent {\bf Fact 1}. Let $e=u_1u_2$ be a cut-edge of $H$, and let $I_1$ and $I_2$ be two components of $H-e$. Then $\left|E_G(I_1,I_2)\right|\leq2$.

Suppose this is not true. Let $(U_i,V_i)$ be the bipartition of $I_i$ for $i=1,2$, such that $u_1\in U_1$ and $u_2\in U_2$. Noticing that $n\geq7$, it is possible that there exists only one part $U_i$ with $U_i=\{u_i\}$ and the corresponding $V_i=\emptyset$. Assume that there exists an edge $e_1\in (E_G(U_1,U_2)\cup E_G(V_1,V_2))\setminus e$, or there exist two edges $e_2,e_3\in E_G(U_1,V_2)\cup E_G(V_1,U_2)$. Let $H_1=H+e_1$ or $H_2=H-e+e_2+e_3$. It follows that $H_i$ has $\left|E(H))\right|+1$ edges for $i=1,2$, which contradicts the choice of $H$. Hence, $\left|E_G(I_1,I_2)\right|\leq2$.

Let $L$ be a leaf-block of $H$ and $b_L$ be the unique bridge incident with $L$ in $H$. Applying Fact 1 to the cut-edge $b_L$, it follows that $\left|E_G(L,G\setminus V(L))\right|\leq2$. Since $b_L\in E_G(L,G\setminus V(L))$, it is obtained that $\left|E_G(L,L')\right|\leq1$ for each pair of leaf-blocks $L,L'$ of $H$. In order to complete our proof, we consider the following two cases.

\textbf{Case 1.} If $V(L)\cap X =\emptyset$ for any leaf-block $L$ of $H$, then there exists a non-leaf block $B_\delta$ of $H$ such that $x_0\in B_\delta\cap X$. In this case, we claim that every leaf-block of $H$ is not a singleton. Suppose it is not true. Let $L_0$ be a leaf-block of $H$ with $V(L_0)=\{v_0\}$. It follows from Fact 1 that $d(v_0)\leq2$, and $v_0\notin X$. On the other hand, it is known that $d(v)\geq2$ for each vertex $v$ of each non-leaf block of $H$, which is impossible. Since every leaf-block of $H$ is a maximal connected bridgeless bipartite subgraph, every leaf-block of $H$ has at least 4 vertices. Let $L$ be a leaf-block of $H$. Note that $\left|E_G(L,G\setminus V(L))\right|\leq2$.  Then, take a vertex $v_L$ of $L$ that is not adjacent to $x_0$ and $N(v_L)\subseteq V(L)$. Thus, $d(v_L)\geq\frac{n}{2}-d(x_0)=\frac{n}{2}-\delta(G)$, which implies that $\left|L\right|\geq\frac{n}{2}-\delta(G)+1$. It follows that there exist at most three leaf-blocks of $H$. Otherwise, $\left|G\right|\geq 4 \times(\frac{n}{2}-\delta(G)+1)+1>n+5$, a contradiction. Hence, $\Delta(H^{*})=3$, and there is only one vertex $z_0\in V(H^{*})$ with $d_{H^{*}}(z_0)=3$. We define $B_0$ as the block of $H$ corresponding to $z_0$.

\textbf{Subcase 1.1.} If $B_0$ is a singleton, we let $V(B_0)=\{b_0\}$ and $e$ be a bridge incident to $b_0$. Suppose that $I_1$ and $I_2$ are two components of $H-e$. Without loss of generality, assume that $b_0\in I_2$, and let $L$ be the leaf-block in $I_1$. Bear in mind that $\left|E_G(I_1,I_2)\right|\leq2$. If $d_{I_1}(b_0)=1$, in this case we call $e$, the bridge incident to $b_0$, a bridge of type $\uppercase\expandafter{\romannumeral1}$. Then $\left|I_1\right|\geq\left|L\right|\geq\frac{n}{2}-\delta(G)+1$. If $d_{I_1}(b_0))=2$, in this case we call $e$, the bridge incident to $b_0$, a bridge of type $\uppercase\expandafter{\romannumeral2}$. Suppose that $I_1$ contains at least two blocks. Then $\left|I_1\right|\geq\left|L\right|+1\geq\frac{n}{2}-\delta(G)+2$. Suppose that $I_1$ contains only $L$. Then there exist two vertices $v_1,v_2$ of $L$ such that $b_0$ is adjacent to both $v_1$ and $v_2$.

\noindent {\bf Claim 1}. $\left|L\right|\geq\frac{n}{2}-\delta(G)+2$ for $L$ defined as above.

Suppose it is not true. Assume that $\left|L\right|\leq\frac{n}{2}-\delta(G)+1$. Let $V(L)=\{v_1,v_2, u_1,\ldots,u_t\}$ with $2\leq t\leq\frac{n}{2}-\delta(G)-1$. Since $\left|E_G(L,G\setminus V(L))\right|\leq2$, and $v_ib_0\in E_G(L,G\setminus V(L))$ for $i=1,2$, it follows that $u_i$ is not adjacent to $x_0$ and $N(u_i)\subseteq V(L)$ for $i=1,2$, which implies that $d_{L}(u_i)\geq\frac{n}{2}-d(x_0)=\frac{n}{2}-\delta(G)$. Thus, $\left|L\right|=\frac{n}{2}-\delta(G)+1$, and $u_i$ is adjacent to all other vertices of $L$ for $i=1,\cdots,t$. We can construct a new bipartite spanning subgraph $H'$ of $G$ by adding $b_0$ into $L$, such that $b_0$ and $v_i$ lie in distinct equitable parts and are adjacent in the new block for $i=1,2$, which contradicts the maximality of $H$.

It follows that $\left|I_1\right|\geq\left|L\right|\geq\frac{n}{2}-\delta(G)+2$. Let $k$ be the number of bridges incident to $b_0$ of type $\uppercase\expandafter{\romannumeral2}$. Then $\delta(G)\leq d(b_0)\leq k+3$. As a result, $\left|G\right|\geq 1+3\times(\frac{n}{2}-\delta(G)+1)+k>n+1$, a contradiction.

\textbf{Subcase 1.2.} If $B_0$ is not a singleton, since $B_0$ is a maximal connected bridgeless bipartite subgraph, $B_0$ has at least 4 vertices. Noticing that $d_{H^*}(z_0)=3$, it is obtained that $\left|E_G(B_0,G\setminus V(B_0))\right|\leq6$ by Fact 1. Then there exists at least one vertex $b$ in $B_0$ satisfying that all but at most one of the neighbours of $b$ are contained in $B_0$. Hence, $\left|B_0\right|\geq d(b)\geq\delta(G)$. Consequently, $\left|G\right|\geq \left|B_0\right|+3\times(\frac{n}{2}-\delta(G)+1)\geq\frac{3}{2}n-2\delta(G)+3>n+3$, a contradiction.

\textbf{Case 2.} There exists a leaf-block $L_0$ of $H$ such that $x_0\in L_0\cap X$.

\noindent {\bf Claim 2}. Let $L'$ be a leaf-block of $H$ with
$E_G(L_0,L')=\emptyset$. Then, $L'$ is not a singleton.

Suppose this is not true. Let $L'$ be a leaf-block of $H$, such that $E_G(L_0,L')=\emptyset$ and $V(L')=\{v'\}$. It follows from Fact 1 that $d(v')\leq2$. On the other hand, since $x_0$ is not adjacent to $v'$, we have that $d(v')\geq\frac{n}{2}-d(x_0)=\frac{n}{2}-\delta(G)$. Note that $n\geq7$ and $\delta(G)<\frac{n}{4}$. If $n=7$, then $\delta(G)=1$. Hence, $d(v')\geq\frac{n}{2}-1=\frac{5}{2}$, a contradiction. If $n\geq8$, then $d(v')\geq\frac{n}{2}-d(x_0)=\frac{n}{2}-\delta(G)>\frac{n}{4}\geq2$, that is $d(v')\geq3$, a contradiction.

Note that there exist at most two other leaf-blocks $L_1,L_2$ of $H$. Otherwise, there exist three other leaf-blocks $L_1,L_2,L_3$ of $H$. Since $\left|E_G(L_0,G\setminus V(L_0))\right|\leq2$, there exist at least two leaf-blocks of $L_1,L_2,L_3$, say $L_1,L_2$, such that $E_G(L_0,L_i)=\emptyset$ for $i=1,2$. By Claim 2, $L_i$ is not a singleton for $i=1,2$. Take a vertex $v_i$ of $L_i$ such that $v_i$ is not adjacent to $x_0$ and $N(v_i)\subseteq V(L_i)$ for $i=1,2$. Thus, $\left|L_i\right|\geq\frac{n}{2}-\delta(G)+1$ for $i=1,2$. It is easy to see that any leaf-block other than $L_i,L_j$ has at least $\delta(G)-1$ vertices. As a result, $\left|G\right|\geq 2\times(\frac{n}{2}-\delta(G)+1)+2(\delta(G)-1)+1=n+1$, a contradiction. Hence, $\Delta(H^{*})=3$, and there is only one vertex $z_0\in V(H^{*})$ with $d_{H^*}(z_0)=3$. We define $B_0$ as the block of $H$ corresponding to $z_0$. Let $C_0,C_1,C_2$ be the connected components of $H-V(B_0)$ such that $L_i$ is the leaf-block contained in $C_i$ for $i=0,1,2$. Suppose that there exist two leaf-blocks $L_i,L_j$ of $H$ such that $E_G(L_i,L_j)=\{f\}$ for $0\leq i\neq j\leq2$. Let $e_i$ be the unique bridge incident with both $B_0$ and $C_i$ in $H$. Let $H_1=H-e_i+f$. Note that $H_1$ is also a maximum bipartite spanning subgraph of $G$, but $\Delta (H_1^*)=2$, which contradicts the choice of $H$. Thus, it is obtained that $ E_G(L_i,L_j)=\emptyset$ for any two leaf-block $L_i,L_j$ for $0\leq i\neq j\leq2$.

\textbf{Subcase 2.1.} If $\delta(G)\leq2$, then $d(x_0)=\delta(G)\leq2$. Since $ E_G(L_0,L_i)=\emptyset$ for $1\leq i\leq2$, with the help of Claim 2, $L_i$ is not a singleton for $i=1,2$. Consequently, $\left|L_i\right|\geq\frac{n}{2}-\delta(G)+1$ for $i=1,2$. It follows that $\left|G\right|\geq \left|\{x_0\cup N(x_0)\}\right|+\left|L_1\right|+\left|L_2\right|=(1+\delta(G))+2\times(\frac{n}{2}-\delta(G)+1)=n+3-\delta(G)\geq n+1$, a contradiction.

\textbf{Subcase 2.2.} If $\delta(G)\geq3$, then $d(x_0)\geq3$. First, we obtain that $L_0$ is not a singleton, which implies that $\left|L_0\right|\geq \delta(G)+1$. Next, since $ E_G(L_0,L_i)=\emptyset$ for $1\leq i\leq2$, it follows from Claim 2 that $L_i$ is not a singleton for $i=1,2$, and so $\left|L_i\right|\geq\frac{n}{2}-\delta(G)+1$ for $i=1,2$. Consequently, every leaf-block of $H$ is not a singleton. With the similar argument in Case 1, we distinguish two cases based on the condition that $B_0$ is or not a singleton. The unique different point is that there exists one leaf-block $L_0$ of $H$ with $\left|L_0\right|\geq \delta(G)+1$ in this case, and $\left|L\right|\geq\frac{n}{2}-\delta(G)+1$ for each leaf-block $L$ of $H$ in Case 1. But the unique different point has no influence on proving our result. If $B_0$ is a singleton, it is worth mentioning that if the leaf-block $L$ in Claim 1 is exactly $L_0$, then the corresponding result is changed to $\left|L_0\right|\geq \delta(G)+2$ in parallel. Using the similar argument in Subcase 1.1, we can obtain that $\left|G\right|\geq 1+2\times(\frac{n}{2}-\delta(G)+1)+(\delta(G)+1)+k\geq n+1$, where $k\geq\delta(G)-3$, a contradiction. If $B_0$ is not a singleton, using the similar argument in Subcase 1.2, we can deduce that $\left|G\right|\geq \left|B_0\right|+2\times(\frac{n}{2}-\delta(G)+1)+(\delta(G)+1)\geq n+3$, where $\left|B_0\right|\geq\delta(G)$, a contradiction.

\end{proof}

\section{Proof of Theorem \ref{thm4.1}}

\begin{proof}
The result trivially holds for $\delta(G)\geq\frac{n+6}{8}$ by Theorem \ref{thm2.5}. Next, we only need to consider $n\geq4$ and $\delta(G)<\frac{n+6}{8}$ in the following. Let $G^{*}$ be the bridge-block tree of $G$. If $\Delta(G^{*})\leq2$, then $pc(G)\leq 2$ by Corollary \ref{cor2.1}. Next, assume that $\Delta(G^{*})\geq3$ and let $X=\{x\mid d(x)=\delta(G)\}$.

\textbf{Case 1.} If $V(L)\cap X =\emptyset$ for any leaf-block $L$ of $G$, then there exists a non-leaf block $B_\delta$ of $G$ such that $x_0\in B_\delta\cap X$. In this case, we claim that every leaf-block of $G$ is not a singleton. Suppose it is not the case. Let $L_0$ be a leaf-block of $G$ with $V(L_0)=\{v_0\}$. It follows that $d(v_0)=1$, but $v_0\notin X$, which is a contradiction. Since every leaf-block of $G$ is a maximal connected bridgeless bipartite subgraph, every leaf-block of $G$ has at least 4 vertices. Let $L$ be a leaf-block of $G$. Note that $\left|E_G(L,G\setminus V(L))\right|=1$. Then, take a vertex $v_L$ of $L$ that is not adjacent to $x_0$ and $N(v_L)\subseteq V(L)$ (there are at least $\left|L\right|-1$ such vertices). Thus, $d(v_L)\geq\frac{n+6}{4}-d(x_0)=\frac{n+6}{4}- \delta(G)$. Since each part of $L$ contains at least two vertices, which implies that  $\left|L\right|\geq\frac{n+6}{2}-2 \delta(G)$. It follows that there exist at most three leaf-blocks of $G$. Otherwise, $\left|G\right|\geq 4 \times(\frac{n+6}{2}-2\delta(G))+1>n+7$, a contradiction. Hence, $\Delta(G^{*})=3$, and there is only one vertex $z_0\in V(G^{*})$ with $d_{G^{*}}(z_0)=3$. We define $B_0$ as the block of $G$ corresponding to $z_0$. If $B_0$ is a singleton, then $\delta(G)\leq d(b_0)= 3$. If $B_0$ is not a singleton, since $B_0$ is a maximal connected bridgeless bipartite subgraph, $B_0$ has at least 4 vertices. Consider the subgraph $B_0$. We have that $\delta(B_0)\geq \delta(G) -3$. Noticing that $d_{G^*}(z_0)=3$, then there exists at least one vertex $b$ in $B_0$ satisfying that all the neighbors of $b$ are contained in $B_0$. Hence, $\left|B_0\right|\geq\delta(G)+(\delta(G)-3)=2\delta(G)-3$. Thus, no matter whether $B_0$ is or not a singleton, it always holds that $\left|B_0\right|\geq 2 \delta(G) -5$. As a result, $\left|G\right|\geq 3 \times(\frac{n+6}{2}-2\delta(G))+\left|B_0\right| > n+1$, a contradiction.

\textbf{Case 2.} There exists a leaf-block $L_0$ of $G$ such that $x_0\in L_0\cap X$. If $L_0$ is a singleton, then $\delta(G)=d(x_0)=1$. Let $L$ be another leaf-block of $G$. Then $L$ can not be a singleton. Suppose to the contrary, assume that $V(L)=\{v_0\}$, that is $d(v_0)=1$. Noticing  that $x_0$ is not adjacent to $v_0$, then $d(x_0)+d(v_0)\geq\frac{n+6}{4}\geq\frac{5}{2}$, which is impossible. Hence, $L$ has at least 4 vertices. It follows from $\Delta(G^{*})\geq3$ from that there exist at least two other leaf-blocks $L_1,L_2$ of $G$. Since $E_G(L_0, L_i)=\emptyset$, $d(v_i)\geq\frac{n+6}{4}-d(x_0)=\frac{n+6}{4}-1$ for each vertex $v_i$ of $L_i$ for $i=1,2$, this means that $\left|L_i\right|\geq\frac{n+6}{2}-2$. Thus, $\left|G\right|\geq \left| \{x_0\cup N(x_0)\}\right|+\left|L_1\right|+\left|L_2\right|\geq 1+1+2 \times(\frac{n+6}{2}-2)=n+4$, a contradiction. If $L_0$ is not a singleton, since $L_0$ is a maximal connected bridgeless bipartite subgraph, $L_0$ has at least 4 vertices. In this case, it is easy to check that $\left|L_0\right|\geq 2\delta(G)$. Let $L$ be another leaf-block of $G$. We claim that $L$ is not a singleton. Suppose to the contrary, assume that $V(L)=\{v_0\}$, which implies $d(v_0)=1$. Since $E_G(L_0, L)=\emptyset$, $d(v_0)\geq\frac{n+6}{4}-d(x_0)=\frac{n+6}{4}-\delta(G)>\frac{n+6}{8}\geq\frac{5}{4}$, a contradiction. Hence $L$ has at least 4 vertices. Furthermore, $d(v)\geq\frac{n+6}{4}-d(x_0)=\frac{n+6}{4}-\delta(G)$ for each vertex $v$ of $L$, which implies that $\left|L\right|\geq\frac{n+6}{2}-2\delta(G)$. It follows that there exist at most two leaf-blocks of $G$ other than $L_0$. Otherwise, $\left|G\right|\geq 1+2\delta(G)+3 \times(\frac{n+6}{2}-2\delta(G))>n+7$, a contradiction. Hence, $\Delta(G^{*})=3$, and there is only one vertex $z_0\in V(G^{*})$ with $d_{G^{*}}(z_0)=3$. We define $B_0$ as the block of $G$ corresponding to $z_0$. With similar argument in Case 1, we consider two cases based on the condition whether $B_0$ is or not a singleton. One can find that no matter what case occurs, it always holds that $\left|B_0\right|\geq 2 \delta(G) -5$. As a result, $\left|G\right|\geq 2 \times(\frac{n+6}{2}-2\delta(G))+2\delta(G)+\left|B_0\right| \geq n+1$, which is impossible.
\end{proof}

\end{document}